\colorlet{couleurlien}{green!20!black}
\colorlet{couleurlink}{red!60!black}
\newcommand\sshift[1]{\ensuremath{\mathbf{#1}}}
\newcommand\ForbPat{\ensuremath{\mathcal{F}}}
\newcommand\engendre[1]{\ensuremath{\left<#1\right>}}
\newcommand\val[2]{\ensuremath{#1_{#2}}}
\newcommand\latin[1]{\emph{#1}}
\newcommand\ie{\latin{i.e.}, }
\newcommand\Z{\ensuremath{\mathbb{Z}}}
\newcommand\N{\ensuremath{\mathbb{N}}}
\newcommand\zent[2]{\ensuremath{Z_{#1}(#2)}}
\newcommand\commut[2]{\ensuremath{[#1,#2]}}
\newcommand\opt{dessinetikz}{\input [}\opt{pdftikz}{\includegraphics{[.pdf}}1]{\opt{dessinetikz}{\input #1}\opt{pdftikz}{\includegraphics{#1.pdf}}}
\newtheorem{theorem}{Theorem}[section]
\newtheorem{lemma}[theorem]{Lemma}
\theoremstyle{definition}
\theoremstyle{remark}
\newtheorem{claim}{Claim}[theorem]
\numberwithin{equation}{section}
\newtheorem{conjecture}[theorem]{\bf Conjecture}
\begin{document}

\title{The domino problem on groups of polynomial growth}

\author{Alexis Ballier}
\email{aballier@dim.uchile.cl}
\thanks{Alexis Ballier was supported by FONDECYT Postdoctorado Proyecto
3110088}
\author{Maya Stein}
\email{mstein@dim.uchile.cl \hskip-5pt{\tiny{(Corresponding author)}}}
\thanks{Maya Stein was supported by FONDECYT Regular grant 1140766 and N\'ucleo Milenio: Informaci\'on y Coordinaci\'on en Redes}
\address{Centro de Modelamiento Matem\'atico y Departamento de Ingenier\'ia Matem\'atica, Facultad de Ciencias F\'isicas y Matem\'aticas,
Universidad de Chile,
Beauchef 851, Torre Norte, Piso 7,
Santiago, RM, Chile.}

\subjclass[2010]{37B50, 03D35, 20F18}
\keywords{Domino problem, groups of polynomial growth, virtually free groups, decidability,  thick ends}

\begin{abstract}
        We conjecture that  a finitely generated group has a decidable domino problem if and only if it is virtually free. We show this is true for all virtually nilpotent finitely generated groups (or, equivalently, groups of polynomial growth), and for all finitely generated groups  whose center has a non-trivial, finitely
        generated  and torsion-free subgroup.\\
        Our proof uses a reduction of the undecidability of the domino problem on any such group $G$ to the undecidability of the domino problem on $\mathbb Z^2$, under the assumption that $G$ is not virtually free. This is achieved by first finding a thick end in $G$, and then relating the thick end to the existence of a certain structure, resembling a half-grid, by an extension of a result of Halin.
\end{abstract}

\maketitle

\section{Introduction}

In its original form, the domino problem consists of deciding whether the plane can be tiled with square unrotatable plates of equal sizes and coloured edges, coming from some previously fixed  finite set of plates, so that any two shared edges have the same colour. The problem was introduced by Wang~\cite{wangpatternrecoII} in 1961. Wang's student Berger showed undecidability for the domino problem on the (Euclidian) plane in 1964~\cite{bergerthesis, berger66}, using a reduction to the halting problem. In 1971, Robinson~\cite{robinson} simplified Berger's proof. 

The domino problem on the plane can be nicely expressed representing the plane by~$\Z^2$, and the tiles by symbols from some finite set. The colour restriction is translated by forbidding certain patterns: for instance symbol $a$ may not lie directly below symbol~$b$. Note that
forbidding larger patterns of fixed size does not change the
complexity of the problem.

Generalizations of the problems substitute $\Z^2$ with the Cayley graph of some finitely generated group or semi-group. (For a formal definition of the domino problem on groups see Section~\ref{sec:domino}.) We remark that decidability of the domino problem does not depend on the Cayley graph chosen, but on the (semi-)group itself (since the Cayley graph only depends on the set of generators we chose, and we can rewrite the generators for one graph in terms of the generators for the other graph).

Let us summarize the known results. Robinson~\cite{robinson2} conjectured that  the domino problem on the hyperbolic plane (which corresponds to the Cayley graph of a finitely generated semi-group) is undecidable. This was confirmed by Kari~\cite{kari}, and independently, by Margen\-stern~\cite{margen}. 
Aubrun and Kari~\cite{aubrunKari} showed that also on Baumslag-Solitar groups 
 the domino problem is undecidable. 
 
On the other hand, on finitely generated virtually free groups, the domino problem is decidable.
Indeed, the work of Muller and Schupp~\cite{MullerS83, MullerS85} from the 1980's, complemented with results of Kuske and Lohrey~\cite{KuskeL05} from 2005, gives that a finitely generated group is virtually free if and only if its Cayley graphs have a decidable monadic second order theory (MSO theory for short). Since the domino problem can be expressed in MSO theory (see for instance~\cite{JeandelT09}), we can conclude that all finitely generated virtually free groups have a decidable domino problem. 

We show that for finitely generated virtually nilpotent groups the converse is also true. That is, among all finitely generated virtually nilpotent groups, {\it only} the virtually free groups have a decidable domino problem. 

\begin{theorem}\label{decidomiimpliesfintwO}
For every virtually nilpotent and finitely generated group $G$, it holds that $G$ has a decidable domino problem if and only if $G$ is virtually free. 
\end{theorem}

We show in Section~\ref{sec:domino} how Theorem~\ref{decidomiimpliesfintwO} follows from the following result, Theorem~\ref{decidomiimpliesfintw}, which we believe to be of interest on its own. 

\begin{theorem}\label{decidomiimpliesfintw}
For any finitely generated group $G$ whose center has a non-trivial, finitely
        generated  and torsion-free subgroup, $G$ has a decidable domino problem if and only if $G$ is virtually free. 
\end{theorem}

For instance, the direct product of $\Z$ and any other group $G$ fall under the
hypothesis of Theorem~\ref{decidomiimpliesfintw}: $\Z\times\{1_G\}$ is
a non-trivial, finitely generated  and torsion-free subgroup of its center.
This includes, for example, the direct product of an infinite free Burnside
group or of the Grigorchuck group or of a Tarski monster
group with $\Z$, which fall outside of the hypothesis of
Theorem~\ref{decidomiimpliesfintwO} since they do not have polynomial growth.

       \smallskip

We now give an overview of the proof of 
Theorem~\ref{decidomiimpliesfintw}. We start with a finitely generated group~$G$ that is not virtually free, and whose center has a non-trivial, finitely generated  and torsion-free subgroup. Our aim is to show that $G$ does not have a decidable domino problem.

 As we shall see in detail in Section~\ref{sec:inftw}, a result of Woess~\cite{graphsgroupstrees} implies that $G$ has a thick end (see Section~\ref{sec:inftw} for a definition).
In Section~\ref{sec:thick}, we will show that any Cayley graph of a finitely-generated group~$G$ whose center has a non-trivial, finitely
generated and torsion-free subgroup with a thick end contains 
 a certain half-grid structure, resembling a subdivided $\mathbb N\times \mathbb Z$. This is an extension of a classical result by Halin from infinite graph theory.
         
Then, in Section~\ref{sec:domino}, we use the grid-like structure for a reduction of the undecidability of the domino problem. Namely, first we reduce the undecidability of the domino problem on a finitely-generated group $G$ that is not virtually free but has a non-trivial, finitely
generated and torsion-free center to the undecidability on $\Z ^2$, thus proving Theorem~\ref{decidomiimpliesfintw}. Then we use Theorem~\ref{decidomiimpliesfintw} for proving the analog for finitely generated  virtually nilpotent groups (Theorem~\ref{decidomiimpliesfintwO}). 
Some remarks on the (non-)applicability of our methods to other groups can be found in Section~\ref{conclu}.

\smallskip

We close the introduction with a conjecture due to the first author, suggesting that the equivalence of being virtually free and having a decidable domino problem holds for every finitely generated
group.

\begin{conjecture}
A finitely generated group $G$ has a decidable domino problem if and only if $G$ is virtually free.
\end{conjecture}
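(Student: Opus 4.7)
The plan is to generalise the approach of Theorem~\ref{decidomiimpliesfintw} from virtually nilpotent groups to arbitrary finitely generated groups. The ``if'' direction is already available via the MSO-decidability results of Muller and Schupp together with Kuske and Lohrey cited in the introduction, so the whole content lies in the converse: if $G$ has a decidable domino problem then $G$ is virtually free. I would attack this by the contrapositive, keeping the three-step skeleton of the paper intact.

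First, I would translate ``not virtually free'' into a geometric statement about some Cayley graph. By Diekert and Wei\ss\ (or Muller and Schupp combined with Kuske and Lohrey), a finitely generated group is virtually free if and only if its Cayley graphs have finite tree-width; so a non-virtually-free $G$ admits a Cayley graph of infinite tree-width, which by the characterisation of Woess recalled in Section~\ref{sec:inftw} must contain a thick end. Second, and this is the crux, I would try to show that any Cayley graph of any finitely generated group with a thick end contains a half-grid-like subgraph of the kind built in Section~\ref{sec:thick} for the torsion-free nilpotent case. Halin's classical theorem already supplies infinitely many pairwise disjoint rays converging to the end, and vertex-transitivity of a Cayley graph should force a high degree of self-similarity among them; one would then combine these ingredients with the finiteness of the generating set to organise the rays into a ladder whose rungs are uniformly short words. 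Once the half-grid is in hand, the final step replays the reduction of Section~\ref{sec:domino}: a tile set for $\Z^2$ is lifted to a tile set for $G$ by using auxiliary symbols to ignore the vertices outside the half-grid, and undecidability transfers from $\Z^2$ to $G$.

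The main obstacle is the middle step. In the nilpotent case the paper exploits a torsion-free central element to produce the uniform vertical shift that turns rays into a ladder; for arbitrary finitely generated groups there is no such canonical ``vertical direction''. Groups with non-trivial torsion, or with ends stabilised by complicated subgroups, need not admit a rigidly embedded half-grid at all: the best one can hope for in general may be a minor rather than a subgraph, and mere minors seem too flexible for the domino reduction, which depends on controlling the distances that encode the forbidden-pattern constraints. Bridging this gap appears to require substantially new geometric input, perhaps a classification of thick ends of Cayley graphs beyond the polynomial-growth regime, which is presumably why the statement remains conjectural.
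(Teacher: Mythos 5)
This statement is stated in the paper as a \emph{conjecture}; the paper offers no proof of it, and your proposal does not supply one either. What you have written is an accurate outline of the strategy that the paper itself uses in the virtually nilpotent case, followed by an honest admission that the crucial middle step fails in general. That admission is correct, but it means the proposal is not a proof: the ``only if'' direction remains entirely open beyond the classes the paper covers.

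To be concrete about where the gap sits: the reduction in Lemma~\ref{lemma:undec} does not merely need a half-grid \emph{subgraph}. It needs an infinite-order generator $g_1$ lying in the center $\zent{1}{G}$ together with a ray $w$ none of whose subwords lies in $\engendre{g_1}$. Centrality is used essentially and repeatedly: in Lemma~\ref{lemma:halingrp} to slide the powers $a^{e_{i,k}}$ past the letters of the rays, and in Lemma~\ref{lemma:undec} to verify rule~(III) via the identity $g_{x_{j-1}}g_1^i g_{y_j}=g_{x_{j-1}}g_{y_j}g_1^i$ and to make the ``horizontal lines'' well-defined cosets of $\engendre{g_1}$ on which the SFT rules are locally checkable. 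Halin's theorem gives a subdivided $\N\times\Z$ in any graph with a thick end, but the subdivision paths carry no algebraic coherence and can have unbounded lengths, so the forbidden-pattern encoding does not transfer. The paper itself flags exactly this obstruction in Section~\ref{conclu}: Baumslag--Solitar groups have trivial center, are not virtually free, and require the entirely different techniques of Aubrun and Kari to establish undecidability. Vertex-transitivity alone does not rescue the argument --- it gives self-similarity of the Cayley graph, not a canonical central ``vertical direction''. Until one either finds a substitute rigid structure in every non-virtually-free group or a fundamentally different reduction, the conjecture stands unproved, and your proposal should be read as a (correct) explanation of why, rather than as a proof.
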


\smallskip

\section{Thick ends in finitely generated, not virtually free groups}\label{sec:inftw}

In this section we see that a finitely generated group $G$ which is not virtually free  has a thick end. We also see that such a thick end already appears in any finitely generated subgroup of finite index of $G$.

Whenever we consider a Cayley graph of a finitely generated group $G$, we tacitly assume that this Cayley graph is constructed using a finite set of generators of $G$. In particular, here we only consider  locally finite Cayley graphs of finitely generated groups.

We need to go through some notation. A {\em ray} in a graph is a one-way infinite path. An {\em end} is an equivalence class of rays, under the following equivalence relation: Two rays are equivalent if they are connected by infinitely many disjoint finite paths. Ends were first introduced by Freudenthal~\cite{Freudenthal31, Freudenthal42}. For example, the usual  Cayley graph of~$\Z$ has two ends, the usual Cayley graph of $\Z^2$ has one end, and any Cayley graph of a free group has infinitely many ends. It is not difficult to see that the number of ends is invariant under quasi-isometry.

A {\em thick end} in a graph is an end that contains an infinite set of disjoint rays. Thick ends were introduced by Halin~\cite{halingrid}. A group is said to have a thick end if one of its Cayley graphs has one. Of the three examples above, only $\Z^2$ has a thick end. 

Now, we relate the notion of thickness to the 
notion of the {\it diameter}
 of an end (see~\cite{graphsgroupstrees} for a definition).
Woess~\cite{graphsgroupstrees} showed that  every finitely generated group that is not virtually free has an end of infinite diameter. Together with Theorem~4.4 of~\cite{TW}, which states that in every connected, locally finite, vertex-transitive graph, every end of infinite diameter is thick, this  gives the following.

\begin{lemma}\label{inftwthick}
Every finitely generated group that is not virtually free has a thick end.
\end{lemma}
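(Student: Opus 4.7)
The plan is to combine two ingredients that have already been laid out in this section: Woess's equivalence between finite tree-width and the absence of infinite-diameter ends for locally finite vertex-transitive graphs, and the Menger-type argument (given in the paragraphs just before the lemma) showing that an end of infinite diameter is necessarily thick.

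Concretely, I would start with a finitely generated group $G$ that does not have finite tree-width, and fix any Cayley graph $H$ of $G$ built from a finite generating set. Such an $H$ is locally finite and vertex-transitive, so Woess's Proposition~2 from~\cite{graphsgroupstrees} applies: the failure of finite tree-width on $H$ is equivalent to the existence of an end $\omega$ of $H$ with infinite diameter. This single invocation of Woess's result is what does the heavy lifting; by \emph{quoting} it rather than reproving it, the argument becomes essentially a one-line deduction.

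The second step is to turn the infinite-diameter end $\omega$ into a thick end, which is exactly the content of the long paragraph preceding the lemma statement. I would summarize it as follows: using local finiteness one can thin the nested subgraphs $(H_i)$ witnessing infinite diameter so that $|\partial H_i|$ is strictly increasing and each $\partial H_i$ cannot be separated from $\partial H_{i+1}$ inside the ``annulus'' $K_i$ by fewer than $|\partial H_i|$ vertices; Menger's theorem (in its form for finite vertex sets in infinite graphs) then yields $|\partial H_i|$ disjoint paths in each $K_i$, and concatenating these across all $i$ produces infinitely many disjoint rays, all belonging to $\omega$. Thus $\omega$ is thick, and since $\omega$ lives in a Cayley graph of $G$, the group $G$ has a thick end by definition.

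The main obstacle is purely one of presentation rather than mathematics: everything substantive has been proved in the preceding discussion, so the lemma itself just records the conclusion. In a formal write-up I would therefore keep the proof to a few lines, citing Woess for the tree-width/diameter equivalence and pointing to the paragraph above for the diameter/thickness implication, and only be careful to note that the argument requires local finiteness (automatic from the finite generating set) and vertex-transitivity (automatic for a Cayley graph), which are precisely the hypotheses under which Woess's proposition is stated.
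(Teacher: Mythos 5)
Your proposal matches the paper's argument exactly: the paper also derives the lemma by combining Woess's Proposition~2 (finite tree-width of a locally finite vertex-transitive graph is equivalent to all ends having finite diameter) with the preceding Menger-based construction showing that an end of infinite diameter contains infinitely many disjoint rays and is therefore thick. No gaps; this is the same proof.
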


\smallskip

The next lemma is needed for the proof of Theorem~\ref{decidomiimpliesfintwO}.

\begin{lemma}
        \label{lemma:subgpthickend}
        Let $H$ and $G$ be finitely generated groups such that $H$ is a subgroup
        of finite index of $G$. Then $G$ has a thick end if and only if $H$ has
        a thick end.
\end{lemma}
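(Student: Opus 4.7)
The plan is to reduce the statement to Lemma~\ref{woesslemma} by showing that Cayley graphs of $H$ and $G$ are quasi-isometric. Once this is done, Lemma~\ref{woesslemma} produces a bijection between the ends that preserves thickness, so $G$ has a thick end exactly when $H$ does. Moreover, since any two Cayley graphs of the same finitely generated group are quasi-isometric, applying Lemma~\ref{woesslemma} once more confirms that the existence of a thick end does not depend on the chosen generating set, consistently with the definition given just before the lemma.

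To produce the quasi-isometry, I would fix a finite symmetric generating set $T$ of $H$ (which exists by hypothesis) together with coset representatives $g_1 = e, g_2, \ldots, g_n$ satisfying $G = g_1 H \cup \cdots \cup g_n H$, and set $S = T \cup \{g_1^{\pm 1}, \ldots, g_n^{\pm 1}\}$, which is a finite generating set of $G$. The natural candidate is the inclusion $\iota : H \hookrightarrow G$. With respect to the word metrics $d_T$ on $H$ and $d_S$ on $G$, $\iota$ is immediately $1$-Lipschitz (since $T \subseteq S$) and quasi-surjective (every element of $G$ lies within distance at most $1$ of $\iota(H)$). The only nontrivial step is the reverse Lipschitz bound $d_T(h, h') \leq C\, d_S(h,h')$: given a short $S$-word representing $h^{-1}h' \in H$, one rewrites it as a bounded-length $T$-word by absorbing each letter $g_i^{\pm 1}$ into the coset decomposition, where each absorption contributes at most a constant depending only on the finite data $(T, g_1, \ldots, g_n)$. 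A slicker alternative is to invoke the Milnor-Schwarz lemma: the left multiplication action of $H$ on a Cayley graph of $G$ is isometric, free, and cocompact (the quotient has $[G:H] = n$ vertices), so $H$ with any word metric is quasi-isometric to that Cayley graph, hence to any Cayley graph of $G$.

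Once the quasi-isometry is in hand, Lemma~\ref{woesslemma} yields the claimed equivalence without further work. The main obstacle is the verification of the reverse Lipschitz bound in the elementary construction, which, while essentially routine, demands careful Reidemeister-Schreier style bookkeeping; the Milnor-Schwarz route sidesteps this issue at the cost of appealing to a more substantial tool.
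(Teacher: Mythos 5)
Your proof is correct and follows the same route as the paper: both reduce the statement to Lemma~\ref{woesslemma} via the fact that a finite-index subgroup is quasi-isometric to the ambient group. The only difference is that the paper simply cites this quasi-isometry as a known result (Corollary IV.B.24 of de la Harpe / Proposition 11.41 of Meier), whereas you sketch its proof.
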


For the proof of Lemma~\ref{lemma:subgpthickend}, we need another lemma, and for this we need to quickly recall the notion of quasi-isometry, an equivalence relation which is used for describing the large-scale similarity of two given metric spaces $X,X'$. A  function $\phi: X\to X'$ is called a {\it quasi-isometry} from $X$ to $X'$ if there are positive constants $c,C,\varepsilon$ such that $cd(x,y)-\varepsilon\leq d(\phi (x),\phi(y))\leq Cd(x,y)+\varepsilon$ for all $x,y\in X$, and if for each $x'\in X'$ there is some $x\in X$ such that $d(x', \phi(x))\leq C$. Spaces $X$, $X'$ are quasi-isometric if there is an quasi-isometry from $X$ to $X'$ (then there is also one from $X'$ to $X$).
 It is well known that Cayley graphs of a given  finitely generated group are unique up to quasi-isometry. So, in view of the following lemma, we see that in fact {\it  any} of the Cayley graphs of a group with a thick end has a thick end. 

\begin{lemma}[Woess~\cite{woess}, Lemma 21.4]\label{woesslemma}
Let $G$ and $G'$ be two quasi-isometric\footnote{Woess calls quasi-isometries `rough isometries', but it is the same notion.} locally finite graphs. Then the quasi-isometry extends to the ends of $G$ and $G'$, mapping thick ends to thick ends.
\end{lemma}

Now we are ready to prove Lemma~\ref{lemma:subgpthickend}.

\begin{proof}[Proof of Lemma~\ref{lemma:subgpthickend}]
        Corollary~IV.B.24 of~\cite{delaHarpe} (or Proposition 11.41 of~\cite{meier}) states that if $G$ is a finitely generated group and $H$ is a subgroup of finite index of $G$, then these groups are quasi-isometric. By Lemma~\ref{woesslemma}, we are done.
\end{proof}

\smallskip

\section{A structural property of groups with thick ends}\label{sec:thick}

 The main result of this section is Lemma~\ref{lemma:halingrp}. In this lemma, we show that every  finitely-generated group with a thick end, whose center has a non-trivial, finitely
generated and torsion-free subgroup  contains a structure roughly resembling the $\N\times \Z$ grid. This is an extension of a classical purely graph-theoretical result of  Halin~\cite{halingrid}. Halin's result states  that any infinite graph with a
thick end contains a subdivision of the $\N\times \Z$ grid (a subdivision of a graph $H$ is obtained from $H$ by replacing each edge with a path that has at least one edge; all these new paths have to be disjoint).

Define
\[
        \begin{array}{rcl}
                \commut{x}{y} & = & xyx^{-1}y^{-1},\\
                \zent{0}{G} & = & \left\{1_G\right\}, \\
                \zent{i+1}{G} & = & \left\{x\in{}G | \forall y\in{}G,
                    \commut{x}{y}\in\zent{i}{G}\right\}.
        \end{array}
\]

The subgroup $\zent{1}{G}$ is the {\em center} of $G$. A group $G$ is {\em nilpotent} if there
exists $n\in\N$ such that $\zent{n}{G}=G$.
An element $g\in{}G$ is called a {\em torsion element} if there exists $n\in\N$ such
that $g^n=_{G}1_G$. A group $G$ is called {\em torsion-free} if it does not contain any torsion
element.

\medskip

The following lemma is the heart of this section.

\begin{lemma}
        \label{lemma:halingrp}
        Let $G=\engendre{g_1,\ldots,g_n}$ be a finitely generated group with a
        thick end, such that $\zent{1}{G}$ has a finitely generated, non-trivial
        and torsion-free subgroup $Z$.
        Then there exists $a\in{}Z$ and a ray
        $w\in\left\{g_1,\ldots,g_n\right\}^{\N}$ such that no subword of $w$
        belongs to $\engendre{a}$. That is, for any $i<j\in\N$, $k\in\Z$,
         we have $w_i\ldots{}w_j\neq_{G}a^k$.
\end{lemma}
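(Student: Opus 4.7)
The plan is to choose $a$ as any non-trivial central element of $G$ and then build $w$ as a lift to $G$ of a ray in the quotient $\bar G := G/\engendre{a}$. The guiding observation is that ``no subword $w_i\cdots w_j$ equals a power of $a$'' is equivalent to requiring that the prefix products $v_k := w_0w_1\cdots w_{k-1}$ lie in pairwise distinct cosets of $\engendre{a}$; equivalently, their images $\bar v_k$ trace out a ray in the Cayley graph of~$\bar G$. This also fits the ``half-grid'' intuition advertised by the paper: the rows are translates of the ray by powers of the central element~$a$, and the columns are the corresponding $\engendre{a}$-orbits.

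To produce~$a$, I would use that a non-trivial nilpotent group has non-trivial center (the upper central series strictly increases from $\zent{0}{G}=\{1_G\}$), and pick any $a\in\zent{1}{G}\setminus\{1_G\}$. Since $G$ is torsion-free, $a$ has infinite order and $\engendre{a}\cong\Z$; being central, $\engendre{a}$ is normal in $G$. Next, I would show that $\bar G$ is infinite: if $\engendre{a}$ had finite index in $G$, then by the result invoked in the proof of Lemma~\ref{lemma:subgpthickend} (Corollary~IV.B.24 of~\cite{delaHarpe}), the groups $G$ and $\engendre{a}\cong\Z$ would be quasi-isometric. But the standard Cayley graph of $\Z$ has no thick end, so Lemma~\ref{woesslemma} would contradict the hypothesis on~$G$.

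With $\bar G$ infinite and finitely generated, its Cayley graph on the generating set $\{\bar g_i : \bar g_i\neq \bar 1_G\}$ is an infinite, connected, locally finite graph, so by König's infinity lemma it contains a ray $\bar v_0,\bar v_1,\ldots$ starting at $\bar 1_G$. Reading off the edges of this ray yields a word $\bar w_0\bar w_1\cdots$ in these generators, and lifting each letter $\bar w_k$ to any preimage $w_k\in\{g_1,\ldots,g_n\}$ produces the candidate word~$w$.

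To finish I would verify the two required properties: since the $\bar v_k$ are pairwise distinct, so are their lifts $v_k$, so $w$ is indeed a ray in the Cayley graph of~$G$; and if $w_i\cdots w_{j-1}=a^k$ held for some $i<j$ and $k\in\Z$, then $v_j=v_ia^k$ would force $\bar v_j=\bar v_i$, contradicting the ray property in~$\bar G$. The only really non-routine step is establishing that a thick end forces $[G:\engendre{a}]=\infty$: this is where the three hypotheses (finite generation, torsion-freeness, and nilpotence) combine with the quasi-isometry invariance of thick ends (Lemma~\ref{woesslemma}).
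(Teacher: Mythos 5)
Your proof is correct, and it takes a genuinely different --- and more economical --- route than the paper's. The paper splits on the rank of the center: for rank at least $2$ it takes a constant ray, and for rank $1$ it runs a compactness argument (the nested closed sets $R_m$ of rays avoiding $a^{-m},\dots,a^m$) whose non-emptiness is proved by contradiction using the full strength of the thick end: infinitely many disjoint rays, normal forms $z_{i,k}a^{e_{i,k}}$ with $|z_{i,k}|<K$, divergence of the exponents, and two pigeonhole steps to force two disjoint rays to meet. You instead use the thick end exactly once, to rule out $[G:\langle a\rangle]<\infty$ via the quasi-isometry invariance of thick ends (Lemma~\ref{woesslemma}, as in Lemma~\ref{lemma:subgpthickend}); after that, the statement reduces to the observation that ``no subword lies in $\langle a\rangle$'' is precisely injectivity of the projected prefix walk in $G/\langle a\rangle$, and König's lemma supplies a ray in that infinite, connected, locally finite Cayley graph. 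This treats both rank cases uniformly and, notably, needs weaker hypotheses than the paper's proof: you only use one central element of infinite order, not a finitely generated torsion-free center, which would even sharpen the remark in the Conclusions. Two small caveats, neither fatal: (i) your lift reads off edges of the quotient ray, which a priori are labelled by generators \emph{or their inverses}, so you need the generating set $\{g_1,\dots,g_n\}$ to be symmetric for $w$ to land in $\{g_1,\dots,g_n\}^{\N}$ --- but the paper's own proof makes the same implicit assumption when it treats the disjoint rays of the thick end as words in the generators; (ii) you should note (as the paper does) that $G$ is non-trivial --- it has a thick end, hence is infinite --- before invoking non-triviality of the center of a nilpotent group.
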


\begin{proof}
    Since  $Z$ is finitely
    generated,  the fundamental theorem of finitely
    generated abelian groups (see for instance~\cite{DummitFoote}) tells us that
        $Z$ is isomorphic to
    $\Z^n\oplus\Z_{k_1}\oplus\ldots\oplus\Z_{k_l}$, for some $n\in\N$ and
    $k_i\in\N, 1\leq{}i\leq{}l\in\N$.
    Since $Z$ is torsion-free, there is no such $\Z_{k_i}$ in the
    decomposition. Thus $Z$ is isomorphic to some
    $\Z^n$ with $n\geq{}0$. Moreover, since $Z$ is non-trivial, we know that $n\geq{}1$.
    
    If $n>1$, then let $a$ and $b$ be the elements of $Z$ that this
    isomorphism sends to $(1,0,0,\ldots,0)$ and $(0,1,0\ldots,0)$ in $\Z^n$. Let $w_i=b$ for all $i$; then the conclusion of the lemma follows easily.

    So from now on suppose $n=1$. Let $a$ be the element of $Z$ that is sent to
    $1\in\Z$ by the isomorphism between $Z$ and $\Z$; that is, $Z=\engendre{a}$.
    The discrete topology on $\left\{g_1,\ldots,g_n\right\}$ makes this space compact
    since it is finite, moreover, by the Tychonoff theorem,
    $\left\{g_1,\ldots,g_n\right\}^{\N}$ is also compact when embedded with the
    product topology.
    
    For any $m\in\N$, let $R_m$ be the set of all rays that have no non-empty subword equal  to
    any of $a^{-m},a^{-m+1},\ldots,a^m$ (where equality is taken in $G$).

\begin{claim}
\label{claim:mnn}
    For each $m\in\N$, the set $R_m$  is a non-empty compact subset of
    $\left\{g_1,\ldots,g_n\right\}^{\N}$.
\end{claim}

It is clear that the $R_m$'s from Claim~\ref{claim:mnn} are such that
$R_{m+1}\subseteq{}R_m$ and since they are all non empty, $\cap_{m\in\N}R_m$ is
also non empty by compactness.
Any element of $\cap_{m\in\N}R_m$ is a ray $w$ matching the conclusions.
This is enough to prove the lemma. So, it only remains to show Claim~\ref{claim:mnn}, which we will do in the remainder of the proof.

    By definition, $R_m$ clearly is a closed subset of the compact space 
    $\left\{g_1,\ldots,g_n\right\}^{\N}$, and is therefore
    compact too. Thus we only need to show that $R_m\neq \emptyset$, for $m\in\N$. Suppose otherwise. Then each of the rays
    $w\in\left\{g_1,\ldots,g_n\right\}^{\N}$ contains a subword equal to one of 
    $a^{-m},a^{-m+1},\ldots,a^m$. So, by compactness,
    there exists $K\in\N$ such that all the (now finite) paths
    $w\in\left\{g_1,\ldots,g_n\right\}^K$ contain a subword equal to one of 
    $a^{-m},a^{-m+1},\ldots,a^m$.
        
    Let $\{w^i\}_{i\in\N}$ be an infinite set of disjoint rays going to a thick
    end, with starting vertices~$v_i$.
    By disjointness, for all $i,j,k,k'\in\N$ with $i\neq j$ we have that
    \begin{equation}\label{disjointrays}
    v_iw^i_{1}\ldots{}w^i_k\neq{}v_jw^j_{1}\ldots{}w^j_{k'}.
    \end{equation}

    We claim that for any $i,k$, we can write $v_iw^i_{1}\ldots{}w^i_k=z_{i,k}a^{e_{i,k}}$ with
    $z_{i,k}$ being a word on the generators of $G$ and $e_{i,k}\in\Z$, such that 
    \begin{itemize}
            \item the length of $z_{i,k}$ as a word is less than $K$, and
            \item $|e_{i,k}-e_{i,k+1}|\leq{}m$.
    \end{itemize}
    Indeed, for a given $i$, we define $z_{i,k}$ and $e_{i,k}$ by induction on $k$. For $k=0$, note that
    since any path of length $K$ contains $a^j$, for some
    $j$ with 
    $-m\leq{}j\leq{}m$, and since $a^j$ commutes with all the elements of $G$, we have 
    $v_i=z_{i,0}a^{e_{i,0}}$, where $z_{i,0}$ is of length
    less than $K$ and $e_{i,0}\in\Z$. For the induction step, observe that
    as $a$ commutes with all the elements of~$G$, and by the induction hypothesis, we have
    $v_iw^i_1\ldots w^i_{k+1}=z_{i,k}w^i_{k+1}a^{e_{i,k}}$.
    If $z_{i,k}w^i_{k+1}$ has length less than $K$,  we define
    $z_{i,k+1}=z_{i,k}w^i_{k+1}$ and $e_{i,k+1}=e_{i,k}$.
    Otherwise, $z_{i,k}w^i_{k+1}$ is of length at least $K$ and can be written as
    $z_{i,k+1}a^j$, for $-m\leq{}j\leq{}m$, where $z_{i,k+1}$ is of length 
    less than $K$. We then define $e_{i,k+1}=e_{i,k}+j$.
    
    Let us now argue that $e_{i,k}\to_{k\to\infty}\pm\infty$. Indeed, otherwise  for an infinity of $k$'s, the elements
    $z_{i,k}a^{e_{i,k}}$ are at a bounded distance of
    $1_G$. Hence, the pigeonhole principle
    allows us to find $k\neq{}k'$ such that
    $$v_iw^i_1\ldots w^i_k=z_{i,k}a^{e_{i,k}}=z_{i,k'}a^{e_{i,k'}}=v_iw^i_1\ldots w^i_{k'}.$$ But this contradicts~\eqref{disjointrays}.

    We assume, without loss of generality, that for an infinity of
    $i$'s, $e_{i,k}\to_{k\to\infty}+\infty$.
    Since for every $i$ and $k$, $|e_{i,k+1}-e_{i,k}|\leq{}m$, by considering
    only the $e_{i,k}$'s that tend to $+\infty$ with $k$,
    for $I\in\N$, there exists $N_I\in\N$ such that for every $i\leq{}I$,
    there exists $k_i$ such that
    $e_{i,k_i}\in{}\left\{N_I-m,N_I-m+1,\ldots,N_I+m\right\}$.

    If $I$ is chosen sufficiently large, by the pigeonhole principle, we can find
    $i\neq{}i'$ such that $z_{i,k_i}=z_{i',k_{i'}}$ and
    $e_{i,k_i}=e_{i',k_{i'}}$ because both $z_{i,k_i}$'s and $e_{i,k_i}$'s
    belong to a finite set whose size does not depend on $I$.
    This means that $v_iw^i_1\ldots w^i_{k_i}=v_{i'}w^{i'}_1\ldots w^i_{k_{i'}}$ for
    some $i\neq{}i'$, again contradicting~\eqref{disjointrays}.
    
This completes the proof of Claim~\ref{claim:mnn} and thus the proof of the lemma.
\end{proof} \smallskip

\section{Decidability of the Domino Problem}\label{sec:domino}

In this section we reduce the undecidability of the domino problem in groups with the `half-grid' structure found in Section~\ref{sec:thick} to the undecidability of the domino problem on $\Z^2$. This will enable us to prove our main theorem.

\smallskip

We start by formally defining the domino problem. We need to introduce some notation first.
Let $\Sigma$ be a finite set, called the \emph{alphabet}, and let $G$ be a finitely generated
group, given together with a finite set of generators. Recall that for decidability of the domino problem it does not matter which finite set of generators was chosen.

The set $\Sigma^G$ is
called the \emph{fullshift}. An element of the fullshift is called a
\emph{configuration}.
For a finite set $D\subseteq{}G$, an element of $\Sigma^D$ is called a
\emph{pattern} over $D$.
For a configuration $x\in\Sigma^G$ and a pattern $P\in\Sigma^D$, we say that
\emph{$P$ appears in $x$} if there exists $i\in{}G$ such that for every
$j\in{}D$, $x_{ij}=P_j$.
A subset $\sshift{X}$ of $\Sigma^G$ is called a \emph{$G$-SFT} if it is
precisely the set of configurations of $\Sigma^G$ that do not contain any
pattern from a given \emph{finite} set of patterns $\ForbPat$.

In addition, we say that an SFT is \emph{one-step} if $\ForbPat$ contains only
elements of some $\Sigma^D$ with $D=\left\{1_G,g_i\right\}$ where $g_i$ is a
generator of $G$. One-step SFTs correspond to the intuition one has of a
tiling of $G$ and to the dominoes of Wang~\cite{wangpatternrecoII} on $\Z^2$.
It is well known that any SFT is conjugate to a one-step SFT, and the conjugacy
can be computed from its forbidden patterns and its alphabet.
For more information on SFT's and symbolic dynamics in general, see
\cite{lindmarcus}.

The \emph{domino problem} on $G$ consists of deciding if there exists a
configuration not containing any pattern of $\ForbPat$ when given $\ForbPat$;
equivalently, deciding whether the $G$-SFT defined by $\ForbPat$ is
non-empty~\cite{wangpatternrecoI,wangpatternrecoII}.

As a warm-up, we start with an easy lemma.

\begin{lemma}
        \label{lemma:subgroupundec}
        Let $H$ and $G$ be finitely generated groups such that $H$ is a subgroup
        of $G$.
        If the domino problem on $H$ is undecidable then so it is on $G$.
\end{lemma}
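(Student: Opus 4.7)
The plan is a direct many-one reduction from the domino problem on $H$ to the domino problem on $G$. Since decidability of the domino problem is independent of the chosen finite generating set, I would start by fixing a finite generating set $S_H$ of $H$ and extending it to a finite generating set $S_G \supseteq S_H$ of $G$; this is possible because both groups are finitely generated and $H \leq G$. The reduction will take a forbidden-pattern set $\mathcal F$ for the $H$-domino problem (whose domains $D\subseteq H$ are written in terms of elements of $H$) and output the same set $\mathcal F$, now interpreted as forbidden patterns over $G$ via the inclusion $H\subseteq G$. This transformation is clearly computable (it is literally the identity on the finite data describing $\mathcal F$), so it only remains to show that the $G$-SFT defined by $\mathcal F$ is non-empty if and only if the $H$-SFT defined by $\mathcal F$ is non-empty.

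The key structural observation is that, because every $j\in D$ lies in $H$, the condition ``$P$ appears at $i$'' for $i\in G$, $P\in\Sigma^D$, only constrains the values of a configuration $x\colon G\to\Sigma$ on elements $ij$ with $j\in H$, which all lie in the single left coset $iH$. Thus the forbidden patterns decompose across the partition $G=\bigsqcup_\alpha g_\alpha H$ into independent constraints on each coset, and on each coset $g_\alpha H$ the constraints are naturally isomorphic (via $h\mapsto g_\alpha h$) to the $H$-domino constraints coming from $\mathcal F$.

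With this in hand, both directions of the equivalence are immediate. If there exists $y\in\Sigma^H$ avoiding $\mathcal F$, fix coset representatives $\{g_\alpha\}$ and define $x\in\Sigma^G$ by $x(g_\alpha h):=y(h)$; if some $P\in\mathcal F$ appeared at $i=g_\alpha h_0\in G$, the equality $x(ij)=y(h_0 j)$ for all $j\in D$ would exhibit $P$ in $y$, a contradiction. Conversely, if $x\in\Sigma^G$ avoids $\mathcal F$, setting $y(h):=x(h)$ for $h\in H$ yields an $H$-configuration: if $P\in\mathcal F$ appeared at $i\in H$, then $x(ij)=P_j$ for all $j\in D$, so $P$ also appears at $i$ in $x$, again a contradiction. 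Hence the $H$-SFT is non-empty exactly when the induced $G$-SFT is, so undecidability of the domino problem on $H$ transfers to $G$.

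There is no real obstacle here; the only point requiring minor care is the bookkeeping between the left action defining ``$P$ appears at $i$'' in the paper's conventions and the fact that left cosets of $H$ in $G$ are stable under right multiplication by elements of $H$. Once this is noted, the reduction and its correctness follow automatically, and the lemma is established.
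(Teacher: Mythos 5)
Your proposal is correct and follows essentially the same route as the paper: the same forbidden-pattern set is reinterpreted over $G$, one direction is proved by restricting a $G$-configuration to $H$, and the other by copying an $H$-configuration onto each left coset via coset representatives (your $x(g_\alpha h):=y(h)$ is exactly the paper's $c'_g:=c_{h(g)}$). The coset-stability observation you make explicit is the same point the paper uses implicitly when noting that all forbidden patterns are defined on $H$.
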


\begin{proof}
        Let $\sshift{X}_H$ be an $H$-SFT. The same $\sshift{X}_H$-forbidden
        patterns can be forbidden to obtain $\sshift{X}_G$, a $G$-SFT.
        It is clear that if $\sshift{X}_G$ is non-empty then so is
        $\sshift{X}_H$: For $x\in\sshift{X}_G$, $x_{|H}$ is an element of
        $\sshift{X}_H$.
        
        For the converse, i.e.~in order to see that  if $\sshift{X}_H\neq\emptyset$ then also
        $\sshift{X}_G\neq\emptyset$, proceed as follows. Consider the left cosets  $\left\{gH|
        g\in{}G\right\}$ of $H$ in $G$. 
        As these cosets form a partition of $G$, we can write $\left\{gH|
        g\in{}G\right\}=\left\{g_iH| i\in{}I\right\}$ such that
        $g_iH\neq{}g_jH$ if $i\neq{}j$. 
        
        There exist functions
        $f:G\to\left\{g_i|i\in{}I\right\}$ and $h:G\to{}H$ such that
        $g=f(g)h(g)$.
        For a configuration $c\in\sshift{X}_H$, we define $c'\in\Sigma^G$ by setting
        $\val{c'}{g}:=\val{c}{h(g)}$. Since all the forbidden patterns of
        $\sshift{X}_G$ are defined on $H$, we conclude that $c'$ does not contain any
        $\sshift{X}_G$-forbidden pattern (because $c$ does not contain any
        $\sshift{X}_H$-forbidden pattern). Thus $c'\in\sshift{X}_G$.

        We conclude that $\sshift{X}_H$ is non-empty if and only if
        $\sshift{X}_G$ is. Hence, if there exists an algorithm deciding the
        domino problem on $G$, such an algorithm can be used to decide it on
        $H$, completing the proof.
\end{proof}

The next lemma contains the reduction of the decidability of the domino problem on a group $G$ which contains the structure from Section~\ref{sec:thick}, to the decidability of the domino problem on $\Z^2$.

\begin{lemma}
        \label{lemma:undec}
        Let $G=\engendre{g_1,\ldots,g_n}$ be a finitely generated group such that $g_1$
        has infinite order, $g_1\in\zent{1}{G}$ and there exists a ray
        $w\in\left\{g_1,\ldots,g_n\right\}^{\N}$ such that no subword of $w$
        belongs to $\engendre{g_1}$, \ie for any $i<j\in\N$,
        $w_i\ldots{}w_j\not\in\engendre{g_1}$.
        Then the domino problem on $G$ is undecidable.
\end{lemma}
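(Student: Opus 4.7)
The plan is to reduce the undecidable $\Z^2$ domino problem (Berger) to the one on $G$. The hypothesis packages a convenient \emph{half-grid} structure in $G$: since $g_1$ is central and no subword of $w$ of length at least two belongs to $\engendre{g_1}$, the set $\{g_1^k v_j : k\in\Z,\ j\in\N\}$, with $v_j=w_1\cdots w_j$, consists of pairwise distinct elements behaving like $\Z\times\N$ (multiplication by $g_1$ plays the role of the vertical shift, thanks to centrality; multiplication by $w_{j+1}$ plays the role of the horizontal shift from column $j$ to column $j+1$). A preliminary step is to pass to a subsequence of $w$ so that each $w_j\notin\engendre{g_1}$: the length-$2$ subword condition forces indices with $w_j\in\engendre{g_1}$ to be isolated, so infinitely many ``good'' indices remain and the subsequence still satisfies the no-subword hypothesis.

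Given a one-step $\Z^2$-SFT over an alphabet $\Sigma$ with horizontal and vertical forbidden pairs $F_h, F_v\subseteq\Sigma\times\Sigma$, I would take $\sshift{Y}$ to have alphabet $\Sigma\times\{1,\ldots,n\}$, with configurations written $(a,d)$, and the following size-two forbidden patterns: (i) across every $g_1$-edge, $d$ is preserved and the $a$-values avoid $F_v$; (ii) whenever $d(g)=i$, the $a$-values across the $g_i$-edge based at $g$ avoid $F_h$. These rules yield a finite forbidden set that is computable from the input. For the ``if'' direction, given $(a,d)\in\sshift{Y}$, I would follow the direction to define $\gamma_0:=1_G$, $\gamma_{j+1}:=\gamma_j\cdot g_{d(\gamma_j)}$, and then set $\phi(k,j):=a(g_1^k\gamma_j)$: the vertical check comes from rule (i), and the horizontal check from rule (ii) combined with the identity $g_1^k\gamma_j\cdot g_{d(\gamma_j)}=g_1^k\gamma_{j+1}$ (which follows from centrality of $g_1$ and from the $\engendre{g_1}$-invariance of $d$). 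Hence $\phi$ is a valid $\Z\times\N$-configuration of the input SFT, and a standard compactness argument extends it to all of $\Z^2$.

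For the ``only if'' direction, starting from a valid $\Z^2$-configuration $\phi$, I would set $a(g_1^k v_j):=\phi(k,j)$ and $d(g_1^k v_j):=i_{j+1}$ (where $w_{j+1}=g_{i_{j+1}}$) on the half-grid, and then extend $(a,d)$ to the rest of $G$ by passing to $\bar G:=G/\engendre{g_1}$. The half-grid projects to a ray of distinct cosets $\bar v_j$, and I would extend the direction off this ray so that the induced dynamics $T(\bar g):=\bar g\cdot\overline{g_{d(\bar g)}}$ on $\bar G$ has acyclic forward orbits; I would then construct a set-theoretic section $s$ of $G\to\bar G$ satisfying $s(T\bar g)=s(\bar g)\cdot g_{d(\bar g)}$ along each orbit, by backward induction from the half-ray (where $s(\bar v_j):=v_j$) or from an orbit representative otherwise. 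Finally, I would set $a(g):=\phi(m(g),j(\bar g))$, where $m(g)$ is the $g_1$-exponent of $g$ relative to $s$ and $j:\bar G\to\Z$ is a column index propagated along $T$-orbits by $j(T\bar g)=j(\bar g)+1$. Centrality of $g_1$ then translates each local rule of $\sshift{Y}$ into a constraint that $\phi$ already satisfies.

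The hard part will be the off-half-grid extension: one must coordinate the choices of direction map $d$, section $s$, and column indexing $j$ so that the forward $T$-orbits on $\bar G$ are acyclic and so that centrality of $g_1$ genuinely reduces every local rule of $\sshift{Y}$ to a constraint already holding in $\phi$. Once this bookkeeping is in place, the verification that the constructed $(a,d)$ lies in $\sshift{Y}$ is a routine check, and the equivalence $\sshift{Y}\neq\emptyset\Leftrightarrow X\neq\emptyset$ provides the computable many-one reduction from the $\Z^2$ domino problem that establishes the lemma.
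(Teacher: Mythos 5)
Your reduction is essentially the paper's: the same product alphabet $\Sigma\times\{\text{directions}\}$, the same three rule types (direction component constant along $\langle g_1\rangle$-cosets, one $\mathbb{Z}^2$-direction realized along $\langle g_1\rangle$, the other along the single edge designated by the direction component), the same forward argument by iterating the direction field from $1_G$ and extending by compactness, and the same backward argument seeding a half-grid with the ray $w$. The one substantive difference is the off-half-grid extension, which you reorganize via the quotient $\bar G=G/\langle g_1\rangle$ and explicitly flag as ``the hard part'' without carrying it out. The paper shows this part is actually easy, precisely because your rule (ii) constrains only the edge designated by $d$: extend by (transfinite) recursion, at each step picking an undefined coset $x\langle g_1\rangle$ adjacent to an already defined coset $xg_m\langle g_1\rangle$, setting the direction to $m$ there and copying the column of $\phi$ adjacent to the target's column; the invariant ``if $y$ is defined with direction $j$ then $yg_j$ is already defined'' ensures the only newly constrained edge is the one just created, and since every new coset points to a strictly earlier one (ultimately into the ray of distinct cosets $\bar v_j$), the acyclicity of your map $T$ and the consistency of the column index $j$ come for free rather than needing to be arranged in advance. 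Your preliminary step of discarding the (necessarily isolated) letters $w_j\in\langle g_1\rangle$, using centrality of $g_1$ to check that the no-subword property survives, is a legitimate fix for a well-definedness point that the paper's formalization with $i<j$ leaves slightly implicit.
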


\begin{proof}
        We reduce to the $\Z^2$ case where the problem is already known to be
        undecidable~\cite{bergerthesis,robinson}. For this, we prove that there exists an
        algorithm which for any given  $\Z^2$-SFT $\sshift{X}$ computes a $G$-SFT $\sshift{X_G}$ such that $\sshift{X}$ is non empty if and only if
        $\sshift{X_G}$ is non empty.

        Let $\mathcal{A}(\sshift{X})$ denote the alphabet of $\sshift{X}$. We take the
        alphabet of  $\sshift{X_G}$ to be
        $\mathcal{A}(\sshift{X})\times\left\{2,\ldots,n\right\}$.
        Without loss of generality, we can assume that $\sshift{X}$ is a
        one-step SFT.
        The rules defining $\sshift{X_G}$ are as follows, for any
        $c\in{}(\mathcal{A}(\sshift{X})\times\left\{2,\ldots,n\right\})^G$ and
        any point $x\in{}G$:
        
        \begin{enumerate}[(I)]
                \item If $\val{c}{x}=(a,i)$ then $\val{c}{xg_1}=(b,i)$ and
                        $\val{c}{xg_1^{-1}}=(b',i)$  for some
                        $b,b'$.\\
                        In words: The second component is constant on the lines
                        defined by $\engendre{g_1}$.
                \item $\val{c}{x}=(a,i)$ and $\val{c}{xg_1}=(b,i)$ is allowed if
                        and only if $a$ is allowed left to $b$ in
                        $\sshift{X}$.\\
                        In words: The $\engendre{g_1}$ lines in $\sshift{X_G}$
                        represent the horizontal lines of $\sshift{X}$.
                \item $\val{c}{x}=(a,i)$ and $\val{c}{xg_i}=(b,j)$ is allowed if
                        and only if $a$ is allowed below $b$ in~$\sshift{X}$.\\
                        In words: The second component in the alphabet of
                        $\sshift{X_G}$ dictates the vertical direction and
                        following those paths represent the vertical direction
                        of $\sshift{X}$.
        \end{enumerate}
        
        Every other case is allowed. It is clear that one can compute the
        forbidden patterns defining $\sshift{X_G}$ when given those of $\sshift{X}$.
        It remains to prove that $\sshift{X_G}$ is non empty if and only if
        $\sshift{X}$ is.
        
        \medskip

        \textbf{If $\sshift{X_G}\neq\emptyset$, then $\sshift{X}\neq\emptyset$:}
        Let $c$ be a configuration of $\sshift{X_G}$.
        First, we define $c'$ only on $\mathcal{A}(\sshift{X})^{\Z\times\N}$. This will be done by induction as
        follows.
        
        For $i\in\Z$, choose $\val{c'}{(i,0)}$ so that $(\val{c'}{(i,0)}, x_0)=\val{c}{g_1^i}$, for some $x_0$. Then,  for
        $j\geq{}1$ and $i\in\Z$, let $\val{c'}{(i,j)}$ be such that
        $(\val{c'}{(i,j)}, y_j)=\val{c}{g_{x_{j-1}}g_1^i}$, where we define $g_{x_j}:=g_{x_{j-1}}g_{y_j}$ inductively.
        
        In order to see that $c'$ does not contain any
        $\sshift{X}$-forbidden pattern, observe no forbidden horizontal pattern
        may occur by rules (I) and (II), and by the definition of $c'$. For the
        vertical patterns, consider two points $a=c'_{(i,j)}$ and
        $b=c'_{(i,j+1)}$. Then by construction,
        $$\val{c}{g_{x_{j-1}}g_1^i}=(a,y_j)$$ and
        $$\val{c}{g_{x_{j-1}}g_1^ig_{y_j}}=\val{c}{g_{x_{j-1}}g_{y_j}g_1^i}=\val{c}{g_{x_{j}}g_1^i}=(b,y_j).$$
        Thus by rule (III), also no forbidden vertical pattern occurs.
        
          A standard compactness argument shows that we can extend $c$ to all of $\mathcal{A}(\sshift{X})^{\Z\times\Z}$. Thus
        $\sshift{X}$ is non empty.
        
        \medskip

        \textbf{If $\sshift{X}\neq\emptyset$ then $\sshift{X_G}\neq\emptyset$:}
        Let $c$ be a configuration of $\sshift{X}$. Recursively define a configuration $c'$ of $\sshift{X_G}$ as follows. At each step $s$,  the set of coordinates $L_s\subseteq{}G$ for which $c'$ is already
        defined will satisfy the following conditions:
        \begin{enumerate}[(a)]
        \item on $L_s$, configuration $c'$ does not contain any forbidden patterns,\label{a}
        \item if $x\in L_s$, then $xg_1^\ell\in L_s$ for all $\ell\in\Z$, and there are $k,n\in \mathbb Z$ and $j\in\{2,\ldots ,n\}$ such that $c'_x=(c_{(k,n)},j)$ and $c'_{xg_1^\ell}=(c_{(k+\ell,n)},j)$ for all $\ell\in \mathbb Z$,\label{b}
        \item if $x\in L_s$ with $c'_x=(z,j)$, then $xg_j\in L_s$.\label{c}
        \end{enumerate}

        We start by defining $c'$ for all points $w_1\ldots{}w_j$ of the ray $w$ from the assumption of the lemma, and all lines $w_1\ldots{}w_j\engendre{g_1}$. In other words, we take $$L_1:=\{w_1\ldots{}w_jg_1^\ell\ : \ j\in\mathbb N, \ell\in\mathbb Z\}.$$
        
        Define $$\val{c'}{w_1\ldots{}w_jg_1^\ell}:=(\val{c}{(\ell,j)},i)$$ for $\ell\in\Z$
        and $j\in\N$, where $i=i(j)$ is such that $g_i=w_{j+1}$.
        In this way $c'$ is well defined. Indeed,  if
        $w_1\ldots{}w_jg_1^\ell=w_1\ldots{}w_{j'}g_1^{\ell'}$ for some $\ell,\ell',j,j'$, then $j\neq j'$ since $g_1$ has infinite order. Therefore, we may assume that 
        $j'>j$, and hence
        $w_j\ldots{}w_{j'}=g_1^{\ell-\ell'}\in\engendre{g_1}$ contradicting our
        hypothesis on $w$.
        
        It is easy to check that conditions~\eqref{b} and~\eqref{c} are satified. For~\eqref{a}, first note that rules (I) and (II) clearly hold. For rule (III), suppose there is an $x=w_1\ldots{}w_j g_1^\ell$ with $\val{c'}{x}=(a,i)$ and $\val{c'}{xg_i}=(b,j)$ for some $a,b,i,j$ with $i\neq j$. Then by construction, we have that $g_i=w_{j+1}$. Further $a=\val{c}{(\ell,j)}$, and $b=\val{c}{(\ell,j+1)}$. Thus $a$ and $b$ are as necessary for rule (III).
        
        Now assume we are in step $s$, and wish to define $L_s$. Let $x\in{}G\setminus{}L_{s-1}$ be such that $xg_{m}\in{}L_{s-1}$ for some $1<m\leq{}n$. (We may assume such an $x$ exists, as otherwise we have defined $c'$ for all of $G$.) 
        
        By~\eqref{b}, also $xg_1^\ell\in L_{s-1}$, for all $\ell \in\mathbb Z$. Because of the second part of~\eqref{b}, there are   $k,n,j$ such that for all $\ell\in Z$ we have
        \[
        \val{c'}{xg_{m}g_1^\ell}=(\val{c}{(k,n)},j).
        \]
        
        We set $$L_s:=L_{s-1}\cup x\engendre{g_1}$$
   and define $$\val{c'}{xg_1^\ell}:=(\val{c}{(k+\ell,n-1)},m)$$ for all $\ell\in\mathbb Z$. Note that in this way $c'$ is well defined, as the $ xg_1^\ell$ are all distinct, and furthermore,  none of them is in $L_{s-1}$,  by~\eqref{b}, and since $x\notin L_{s-1}$.
   
        It is clear that~\eqref{b} holds in step $s$. Further,~\eqref{c} in step $s$ holds by the choice of $x$ and the definition of $c'$. So we only need to check~\eqref{a}, and again, rules (I) and (II) are easy.
        
        For rule (III), suppose there is an $x$ with $\val{c'}{x}=(a,i)$ and $\val{c'}{xg_i}=(b,j)$ for some $a,b,i,j$ with $i\neq j$. By~\eqref{a} for earlier steps, we may assume that one of $x$, $xg_i$ lies in $L_s\setminus L_{s-1}$. 
        We employ~\eqref{c} for step $s-1$ to see that if $xg_i\in L_s\setminus L_{s-1}$, then also $x\in L_s\setminus L_{s-1}$. So in all cases $x\in L_s\setminus L_{s-1}$.
By definition of $c'$, this means that $i=m$. Thus by construction, $a$, $b$ are as desired for rule (III).    
    
        \smallskip
        
        As in each step $s$ we add at least one element of $G\setminus L_{s-1}$ to $L_s$, after transfinitely many steps (namely, after at most $|G|$ steps) the union of all sets $L_s$ is $G$, and thus, using~\eqref{a}, we see that~$c'$ defines a configuration of $\sshift{X_G}$. Hence $\sshift{X_G}$ is
        non empty, as desired.
\end{proof}

We are now ready to prove Theorems~\ref{decidomiimpliesfintwO} and~\ref{decidomiimpliesfintw}.

\begin{proof}[Proof of Theorem~\ref{decidomiimpliesfintw}]
 Let $G$ be a  finitely generated group whose center has a non-trivial, finitely
        generated  and torsion-free subgroup. Assume $G$ is not virtually free.
                By Lemma~\ref{inftwthick} it follows that $G$ has a thick end.
         Hence, we may apply Lemma~\ref{lemma:halingrp} to $G$ and can deduce with the help of
        Lemma~\ref{lemma:undec} that the domino problem is undecidable on $G$.
\end{proof}

\begin{proof}[Proof of Theorem~\ref{decidomiimpliesfintwO}]
 Assume $G$ is finitely generated and virtually nilpotent, but not virtually free.
        Since any nilpotent group
        has a torsion free subgroup of finite index, we can choose a nilpotent and torsion-free subgroup $H$ of finite
        index of $G$.
        By Schreier's lemma, $H$ is finitely generated since it is a subgroup of finite
        index of a finitely generated group.
        
        By Lemma~\ref{inftwthick}, $G$ has a thick end and so, by Lemma~\ref{lemma:subgpthickend}, $H$ has a thick end.
        As~$H$ is nilpotent and finitely generated, also the subgroup $\zent{1}{H}$ is finitely generated (see for instance Lemma 1.2.2 of~\cite{stableGroups}). Moreover, as~$H$ is nilpotent, $\zent{1}{H}$ is non-trivial.
        Hence,
 we may apply Theorem~\ref{decidomiimpliesfintw} to see that the domino problem is undecidable on~$H$. By
        Lemma~\ref{lemma:subgroupundec}, the domino problem is undecidable on
        $G$ as well.
\end{proof}

\section{Conclusions}\label{conclu}

We conjecture the equivalence from Theorems~\ref{decidomiimpliesfintwO} and~\ref{decidomiimpliesfintw} holds for every finitely generated group but we
note that there are known cases that do not fall under the scope of our results: 
Most Baumslag-Solitar groups have a trivial center but none of them is virtually
free and all of them have an undecidable domino problem~\cite{aubrunKari}.
Lemma~\ref{lemma:halingrp} is really focused on exhibiting the $\Z^2$-like
structure of certain groups, which allows us to carry out a reduction to the domino problem
on $\Z^2$; on the other hand, Aubrun and Kari~\cite{aubrunKari} give a new proof
of the undecidability of the domino problem adapted to the structure of the
groups they are studying. This latter approach is probably needed for
Baumslag-Solitar groups since it seems difficult to find a $\Z^2$-like structure
withing these groups. However, it may be possible to combine their 
methods with ours in order to find a larger class of groups with undecidable
domino problem.

\section*{Acknowledgment}
We would like to thank the anonymous referees for their useful remarks.

\bibliographystyle{plain}
\bibliography{../../../biblio/biblio}

\end{document}